\newcommand{\EMPTYSET}{\mbox{$\O$}}
\newcommand{\HHS}{\mbox{$\hspace{2pt}$}}
\newcommand{\CL}{\mbox{$\varphi$}}
\newcommand{\NCL}{\mbox{$\varphi_{\eta}$}}
\newcommand{\CALA}{\mbox{${\cal A}$}}
\newcommand{\CALS}{\mbox{${\cal S}$}}
\newcommand{\COMP}{\mbox{${\HHS \bf \cdot} \HHS$}}
\newcommand{\FTRANS}{\mbox{${\ \stackrel{f}{\longrightarrow}\ }$}}
\newcommand{\GTRANS}{\mbox{${\ \stackrel{g}{\longrightarrow}\ }$}}
\newcommand{\FGTRANS}{\mbox{${\ \stackrel{f.g}{\longrightarrow}\ }$}}
\newcommand{\NBHD}{\mbox{$\eta$}}
\newcommand{\REG}{\mbox{$\rho$}}
\begin{document}

\bibliographystyle{plain}
\bibstyle{plain}

\title{
	{\bf Entropy in Social Networks}
	}
   
\author{
	John L. Pfaltz
 	}
\institute{
	Dept. of Computer Science,  University of Virginia \\
	{\tt jlp@virginia.edu } \\
    }

\maketitle

\begin{abstract}
We introduce the concepts of closed sets and closure operators
as mathematical tools for the study of social networks.
Dynamic networks are represented by transformations.

It is shown that under continuous change/transformation, all
networks tend to ``break down'' and become less complex.
It is a kind of entropy.

The product of this theoretical decomposition is an abundance
of triadically closed clusters which sociologists have 
observed in practice.
This gives credence to the relevance of this kind of 
mathematical analysis in the sociological context.
\end{abstract}
\section {Introduction} \label{I}

The term ``entropy'' has a variety of interpretations depending on its
context.
In information theory, it is a measure of ``surprise'', the difference 
between the actual transmission and a purely random signal, or
Shannon entropy.
This idea of defining network ``complexity'' as the difference between 
a given network and a random network has been pursued by several 
researchers, $e.g.$
\cite{AnaBia09,Bia08,Bia09,LiWanWanZho08}.
Two problems  are ``what is a $random$ network?'', and
``how does one measure the $difference$ between networks?''
Not surprisingly, most efforts are statistical, frequently employing
an eigen-analysis of the adjacency matrix $\CALA$ defining the network
\cite{New06,RicSea00}.

A different interpretation of ``entropy'' is associated with the idea
that dynamic systems gradually lose ``energy'' unless maintained by an
outside source.
Here entropy can be viewed as the energy needed to maintain a steady state.
We subscribe to a version of this latter interpretation.

Many social networks are dynamic
\cite{KosWat06,McCCar11}.
One can examine network structure in the light of network change and
ask how the structure is a result of social processes.
Demetrius and Manke
\cite{DemMan05}
is a good example of this approach, although their methodology is rather
different from ours.
Dynamic social processes also play a role in Granovetter's work
\cite{Gra73},
which we will examine more closely in Section \ref{CASN}.

In this paper we describe network structure in terms of closed sets
and closure operators, which we introduce in Section \ref{C}.
Social processes are modeled by mathematical transformations defined
in Section \ref{T}.
We then ask how various kinds of transformations, specifically
continuous transformations, affect closure relations.
Of particular interest is the question: given the concepts of 
separation and connectivity defined in terms of closure operators 
often used in a social context (Section \ref{SC}), what happens
to separated/connected sets under a continuous transformation.
The results are somewhat surprising, and lead to triadically
closed sub-structures that appear to be relevant in social analysis.

\section {Dynamic Closure Spaces} \label{DCS}

Discrete structures, such as networks, are hard to describe in detail.
They may be visualized as graphs
\cite{Fre00};
but as the structures become large this becomes increasingly difficult.
One may use statistics to describe features, $e.g.$
\cite{KleKumRag99,LesLanDasMah08};
but this conveys little information regarding local structure.

We have found the concept of closure and closed sets over a ground
set, $P$, of ``points'' or ``nodes'' or ``individuals'' to be of
value in a variety of discrete applications
\cite{Pfa96,Pfa08,Pfa11}.
In this section, we briefly develop the general mathematics of this
approach.
In Section \ref{CASN}, we specialize these ideas to the case of
social networks.

\subsection{Closure} \label{C}

An operator $\CL$ is said to be a {\bf closure operator}
if for all sets $Y, Z \subseteq P$, it is:
 \\
\hspace*{0.5in}
(C1)   extensive, $Y \subseteq Y.\CL$,
\\
\hspace*{0.5in}
(C2)   monotone, $Y \subseteq Z$ implies $Y.\CL \subseteq Z.\CL$, and,
\\
\hspace*{0.5in}
(C3)  idempotent, $Y.\CL.\CL = Y.\CL$.
\\
A subset $Y$ is closed if $Y = Y.\CL$.
In this work we prefer to use suffix notation, in which an operator follows
its operand.
Consequently, when operators are composed the order of application is
read naturally from left to right.
With this suffix notation read $Y.\CL$ as ``$Y$ closure''.
It is well known that
the intersection of closed sets must be closed;
sometimes this is easier to verify.
This latter can be used as the definition of closure,
with the operator $\CL$ defined by
$Y.\CL = \bigcap_{Z_i \ closed} \{ Y \subseteq Z_i \}$.

By a {\bf closure system} $\CALS = (P, \CL)$, we mean a set $P$ of
``points'', ``elements'' or ``individuals'', together with a closure operator $\CL$.
By (C1) the set $P$ must itself be closed. 
In a social network these points are typically individuals, or possibly
institutions.
A set $Y$ is closed if $Y.\CL = Y$.
The empty set, $\EMPTYSET$, may, or may not, be closed.

\subsection {Transformations} \label{T}

A {\bf transformation}, $f$, is a function that maps the sets of one
closure system $\CALS$ into another $\CALS'$.
Because our usual sense of functions, defined with the ground set $P$ as
their domain, are typically expressed in prefix notation
we use a suffix notation to emphasize the set characteristics of
transformations, particularly reminding us that their domain is 
the power set of $P$, or $2^P$, and their value is always a $set$ in the
range, or codomain.

A transformation $(P, \CL) \FTRANS (P', \CL')$ is said to be {\bf monotone} if
$X \subseteq Y$ in $P$ implies $X.f \subseteq Y.f$ in $P'$.

A transformation $(P, \CL) \FTRANS (P', \CL')$ is said to be {\bf continuous} if
for all sets $Y \in P$, $Y.\CL.f \subseteq Y.f.\CL'$,
\cite{Ore46,PfaSla12,Sla04}.
Proofs of the following two propositions can be found in
\cite{PfaSla12,Pfa11}.

\begin{proposition}\label{p.COMP.CO}
Let $(P, \CL) \FTRANS (P', \CL')$, $(P', \CL') \GTRANS (P'', \CL'')$
be monotone transformations.
If both
$f$ and $g$ are continuous, then so is $(P, \CL) \FGTRANS (P'', \CL'')$.
\end{proposition}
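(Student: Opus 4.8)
The plan is to unfold the definition of continuity for the composite transformation and reduce it to the two given continuity hypotheses, using the monotonicity of $g$ as the bridge between them. By definition, showing that $(P,\CL)\FGTRANS(P'',\CL'')$ is continuous amounts to verifying, for every set $Y \subseteq P$, the inclusion
\[
Y.\CL.f.g \;\subseteq\; Y.f.g.\CL'',
\]
where $f.g$ is the composite applied in suffix (left-to-right) order and $\CL''$ is the closure operator on $P''$.

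First I would invoke continuity of $f$, which gives $Y.\CL.f \subseteq Y.f.\CL'$. Then, because $g$ is monotone, I can apply $g$ to both sides of this inclusion without reversing it, obtaining
\[
Y.\CL.f.g \;\subseteq\; Y.f.\CL'.g.
\]
Next I would apply continuity of $g$ to the set $Y.f \subseteq P'$ in place of the generic argument; this yields $(Y.f).\CL'.g \subseteq (Y.f).g.\CL''$, that is, $Y.f.\CL'.g \subseteq Y.f.g.\CL''$. Chaining the two inclusions gives $Y.\CL.f.g \subseteq Y.f.\CL'.g \subseteq Y.f.g.\CL''$, which is exactly the inequality required for continuity of $f.g$.

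The computation itself is a routine two-step inclusion chase, so there is no deep obstacle. The one point that must not be overlooked---and the reason monotonicity appears in the hypotheses at all---is the middle step: continuity is a statement about individual sets, and to transport the inclusion $Y.\CL.f \subseteq Y.f.\CL'$ across the transformation $g$ one genuinely needs $g$ to preserve inclusions. Continuity of $g$ by itself would not license applying $g$ to both sides. Thus the essential content is recognizing where each of the three hypotheses (continuity of $f$, monotonicity of $g$, and continuity of $g$) is consumed, and ordering the applications so that the intermediate closure $\CL'$ is eliminated between the two continuity steps.
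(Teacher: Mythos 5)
Your proof is correct: the two-step inclusion chase (continuity of $f$, then monotonicity of $g$ to transport the inclusion, then continuity of $g$ applied to $Y.f$) is exactly the standard argument, and you correctly identify that monotonicity of $g$ is the hypothesis that licenses the middle step. Note that the paper itself does not prove this proposition---it defers to the cited references---so there is nothing internal to compare against; your argument is the expected one and fills that gap correctly.
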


With topological closure over domains of real variables, the inverse image of any
closed set under a continuous function must be closed.
The following proposition provides a discrete analog.

\begin{proposition}\label{p.IC1}
Let $(P, \CL) \FTRANS (P', \CL')$ be monotone, continuous and let
$Y' = Y.f$ be closed.
Then $Y.\CL.f = Y'$.
\end{proposition}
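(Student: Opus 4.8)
The plan is to prove the equality $Y.\CL.f = Y'$ by establishing the two inclusions $Y.f \subseteq Y.\CL.f$ and $Y.\CL.f \subseteq Y.f$ and then invoking the hypothesis $Y' = Y.f$. Each inclusion draws on a different part of the assumptions, so I would treat them separately.

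First I would dispose of the inclusion $Y.f \subseteq Y.\CL.f$, which needs neither continuity nor closedness. By extensivity (C1), $Y \subseteq Y.\CL$; applying the monotone transformation $f$ to both sides then gives $Y.f \subseteq Y.\CL.f$ directly.

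Next comes the reverse inclusion $Y.\CL.f \subseteq Y.f$, and this is the step where both continuity and the closedness of the image are used. Continuity of $f$ supplies $Y.\CL.f \subseteq Y.f.\CL'$. The decisive observation is that the right-hand side collapses: since $Y' = Y.f$ is closed in $(P', \CL')$, we have $Y.f.\CL' = Y'.\CL' = Y' = Y.f$. Substituting this into the continuity inclusion yields $Y.\CL.f \subseteq Y.f$.

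Combining the two inclusions gives $Y.f \subseteq Y.\CL.f \subseteq Y.f$, hence $Y.\CL.f = Y.f = Y'$, as claimed. The argument is short, and the only delicate point --- really the heart of the statement --- is recognizing that the closedness of $Y'$ is exactly what converts the continuity inequality $Y.\CL.f \subseteq Y.f.\CL'$ into the sharp upper bound $Y.f$; note that idempotence (C3) of $\CL$ is never invoked, and the whole proof rests on the two-sided squeeze above.
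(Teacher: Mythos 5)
Your proof is correct: the squeeze $Y.f \subseteq Y.\CL.f \subseteq Y.f.\CL' = Y.f$, using monotonicity with extensivity for the lower bound and continuity with closedness of $Y'$ for the upper bound, is exactly the right argument. The paper itself gives no proof of this proposition (it defers to the cited references \cite{PfaSla12,Pfa11}), but your reasoning is the standard one any such reference would use, and every step checks out against the paper's definitions of monotone and continuous transformations.
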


$Y$, itself need not be closed; but its closure $Y.\CL$ must have 
the same closed image.

\begin{proposition}\label{prop.IC4}
Let $(P, \CL) \FTRANS (P', \CL')$ be monotone and continuous.
If $X.\CL = Y.\CL$ then $X.f.\CL' = Y.f.\CL'$.
\end{proposition}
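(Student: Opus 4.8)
The plan is to reduce the claim to a single auxiliary identity that holds for \emph{every} subset, and then apply it twice. Specifically, I would first establish that for any $X \subseteq P$,
\[ X.f.\CL' = X.\CL.f.\CL'. \]
Granting this, the conclusion is immediate: since the hypothesis gives $X.\CL = Y.\CL$, the two instances of the identity have equal right-hand sides, so
\[ X.f.\CL' = X.\CL.f.\CL' = Y.\CL.f.\CL' = Y.f.\CL'. \]
Thus the real content lies entirely in the auxiliary identity, and the common-closure hypothesis enters only at this final, symmetric step.

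The core of the argument is therefore to prove the auxiliary identity, which I would do by two inclusions. For $X.f.\CL' \subseteq X.\CL.f.\CL'$, I start from extensivity (C1), $X \subseteq X.\CL$, apply monotonicity of $f$ to get $X.f \subseteq X.\CL.f$, and then apply monotonicity (C2) of $\CL'$ to obtain $X.f.\CL' \subseteq X.\CL.f.\CL'$. For the reverse inclusion I would invoke continuity: by definition $X.\CL.f \subseteq X.f.\CL'$. Applying $\CL'$ to both sides (C2) gives $X.\CL.f.\CL' \subseteq X.f.\CL'.\CL'$, and idempotency (C3) collapses the right-hand side to $X.f.\CL'$, yielding $X.\CL.f.\CL' \subseteq X.f.\CL'$. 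Combining the two inclusions gives the identity.

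There is no serious obstacle here; the one thing to keep straight is which hypothesis does the work in which direction. Both inclusions use monotonicity (C2) of $\CL'$, but the forward direction additionally relies on extensivity (C1) and monotonicity of $f$, whereas the reverse direction is precisely where continuity, together with idempotency (C3), is essential. The symmetry of the concluding step—using the identity once for $X$ and once for $Y$—is what lets me avoid ever treating $X$ and $Y$ asymmetrically, so the hypothesis $X.\CL = Y.\CL$ slots in cleanly without any case analysis.
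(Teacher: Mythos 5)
Your proof is correct, and while it rests on the same raw ingredients as the paper's proof --- extensivity plus monotonicity of $f$ for one inclusion, continuity plus monotonicity and idempotency of $\CL'$ for the other --- it organizes them into a genuinely different architecture. The paper threads the hypothesis through the middle of two symmetric inclusion chains, $X.f \subseteq X.\CL.f = Y.\CL.f \subseteq Y.f.\CL'$ and $Y.f \subseteq X.\CL.f \subseteq X.f.\CL'$, and then concludes by observing that $X.f$ and $Y.f$ both lie in $X.f.\CL' \cap Y.f.\CL'$, which forces $X.f.\CL' = Y.f.\CL'$; that final step silently relies on the fact that an intersection of closed sets is closed (equivalently, on C2 together with C3). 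You instead isolate the hypothesis-free identity $X.f.\CL' = X.\CL.f.\CL'$, valid for \emph{every} subset, and invoke the hypothesis $X.\CL = Y.\CL$ exactly once, by substitution at the end. What your route buys: the auxiliary identity is a clean, reusable lemma (it says the map $X \mapsto X.f.\CL'$ factors through closure, i.e., depends only on $X.\CL$), it makes explicit precisely where idempotency enters, and it sidesteps the paper's somewhat informal intersection argument. What the paper's route buys: it is shorter and needs no standalone lemma. Both arguments are sound; yours is the more modular of the two.
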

\begin{proof}
Let $f$ be continuous and assume that
$X.\CL = Y.\CL$.
By monotonicity and continuity, $X.f \subseteq X.\CL.f  = Y.\CL.f \subseteq Y.f.\CL'$.
Similarly, $Y.f \subseteq X.\CL.f \subseteq X.f.\CL'$.
Consequently, $X.f$ and $Y.f$ are contained in $X.f.\CL' \cap Y.f.\CL' = X.f.\CL' = Y.f.\CL'$.
\qed
\end{proof}

A transformation $\CALS \FTRANS \CALS'$ is said to be {\bf surjective} if
for every closed $Y' \in \CALS'$ there exists a set $Y \in \CALS$
(not necessarily closed)
such that $Y.f = Y'$.
This definition of surjectivity overcomes one of the curses associated with
transformations over finite spaces.
It allows a smaller space, of lesser cardinality, to map "onto" a larger space.

\begin{samepage}
\begin{proposition}\label{p.IC2}
Let $f$ be monotone, continuous and surjective, then for all closed
$Y'$ in $P'$, there exists a closed $Y$ in $P$ such that
$Y.f = Y'$.
\end{proposition}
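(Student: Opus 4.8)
The plan is to reduce the statement to Proposition~\ref{p.IC1}, which already performs the essential step of replacing an arbitrary preimage by its closure without disturbing a closed image. Surjectivity only supplies a preimage of $Y'$ that may fail to be closed; closing it should produce the required closed preimage, and Proposition~\ref{p.IC1} will guarantee that closing does not spoil the image.

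First I would fix a closed set $Y'$ in $P'$ and appeal to the definition of surjectivity to obtain a set $Y_0$ in $P$, not necessarily closed, with $Y_0.f = Y'$. The natural candidate for the desired closed preimage is its closure, $Y = Y_0.\CL$. This set is closed by idempotency, since $Y.\CL = Y_0.\CL.\CL = Y_0.\CL = Y$ using (C3). It then remains only to verify that passing from $Y_0$ to its closure has not altered the image under $f$.

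That remaining check is precisely the content of Proposition~\ref{p.IC1}: because $Y_0.f = Y'$ is closed by hypothesis, and $f$ is monotone and continuous, applying that proposition with $Y_0$ in the role of $Y$ yields $Y_0.\CL.f = Y'$, that is, $Y.f = Y'$. Hence $Y$ is a closed set in $P$ mapping onto $Y'$, as required. I anticipate no genuine obstacle here: all the delicate interaction between the closure operator and continuity has been absorbed into Proposition~\ref{p.IC1}, so the present statement is essentially an immediate corollary—surjectivity furnishes some (possibly non-closed) preimage, and Proposition~\ref{p.IC1} upgrades it to a closed one with the same image. The only point demanding a moment's care is confirming that the produced set is truly closed, which is immediate from idempotency.
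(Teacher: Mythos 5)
Your proposal is correct and follows essentially the same route as the paper's own proof: surjectivity yields a (possibly non-closed) preimage $Y_0$ of $Y'$, and Proposition~\ref{p.IC1} shows that its closure $Y_0.\CL$ has the same image $Y'$. The only difference is cosmetic—you spell out the idempotency check that $Y_0.\CL$ is closed, which the paper leaves implicit.
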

\end{samepage}
\noindent
\begin{proof}
Since $f$ is surjective, $\exists Y, Y.f = Y'$.
Since $f$ is continuous, by Prop. \ref{p.IC1}, $Y.\CL.f = Y'$.
\qed
\end{proof}

\begin{proposition}\label{prop.COMP.SUR}
Let $(P, \CL) \FTRANS (P', \CL')$, $(P', \CL') \GTRANS (P'', \CL'')$
be transformations and let $g$ be monotone, continuous.
If both
$f$ and $g$ are surjective, then so is $(P, \CL) \FGTRANS (P'', \CL'')$.
\end{proposition}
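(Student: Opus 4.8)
The plan is to take an arbitrary closed set $Y''$ in $P''$ and construct, working from right to left, a set $X$ in $P$ whose double image $X.f.g$ equals $Y''$. The natural strategy is to pull $Y''$ back across $g$ and then pull the resulting set back across $f$, so that chaining the two preimage equalities recovers $Y''$.

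First I would pull $Y''$ back across $g$. Here lies the one subtlety: the bare definition of surjectivity for $g$ promises only \emph{some} set $Y'$ in $P'$ (not necessarily closed) with $Y'.g = Y''$, whereas the surjectivity of $f$ supplies preimages only for \emph{closed} target sets. A non-closed $Y'$ would therefore be useless for the second pull-back. To bridge this, instead of invoking the raw definition of surjectivity for $g$ I would apply Proposition \ref{p.IC2}. Since $g$ is assumed monotone, continuous, and surjective, that proposition applies verbatim and yields a \emph{closed} set $Y'$ in $P'$ such that $Y'.g = Y''$.

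With $Y'$ now guaranteed closed, the surjectivity of $f$ applies directly to it: there exists a set $X$ in $P$ (not necessarily closed) with $X.f = Y'$. Composing the two equalities gives $X.f.g = Y'.g = Y''$, so $X$ is the sought-after preimage, and $(P, \CL) \FGTRANS (P'', \CL'')$ is surjective.

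The main obstacle, as flagged above, is precisely the mismatch between the two notions in play: the surjectivity of $g$ hands back an arbitrary preimage, while the surjectivity of $f$ can only consume closed ones. The monotonicity and continuity hypotheses on $g$ are exactly what close that gap, via Proposition \ref{p.IC2}. I note that no analogous hypotheses on $f$ are needed, which matches the asymmetric form of the statement, where $g$ alone is required to be monotone and continuous.
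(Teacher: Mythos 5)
Your proof is correct and takes essentially the same route as the paper: pull the closed set $Y''$ back across $g$ to a \emph{closed} preimage $Y'$, then apply surjectivity of $f$ to $Y'$. The only cosmetic difference is that you cite Proposition \ref{p.IC2} as a packaged lemma, whereas the paper unpacks that same step inline (raw surjectivity of $g$ followed by Proposition \ref{p.IC1} to replace the preimage by its closure).
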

\begin{proof}
Because of Prop. \ref{p.COMP.CO} we need only consider surjectivity.
Let $Y''$ be closed in $P''$.  
Since $g$ is surjective, $\exists Y' \in P'$, $Y'.g = Y''$.
Because, $g$ is continuous we may assume, by Prop. \ref{p.IC1}, that
$Y'$ is closed.
Thus, by surjectivity of $f$, $\exists Y \in P, Y.f = Y'$
Consequently,
$f\COMP g$ is surjective.
\qed
\end{proof}

By $Y'.f^{-1}$ we mean the collection $\{ Y | Y.f = Y' \}$.
Even when $f$ is surjective there may be no inverse $Y'.f^{-1}$
unless $Y'$ is closed in $S'$.
\section {Closure Applied to Social Networks} \label{CASN}

A space, $\CALS$ is said to be {\bf atomistic} if for all singleton
sets $\{x\}$ and $\{y\}$, there can be no transformation 
$\CALS \FTRANS \CALS'$ such that $\{x\}.f = \{y\}.f = y' \neq \EMPTYSET$;
or equivalently, $\{x\}.f = \{y\}.f$ implies $\{y\}.f = \EMPTYSET$.

Atomisticity is a characteristic of the elements in the ground set of
the system $\CALS$.
For example, chemical elements are atomistic;  no two elements can
``fuse'' to become a single element, even though they can combine to 
form more complex structures, or molecules.
On the other hand, if the elements of our network are corporations, it
is not unusual to have two corportation merge into one unit so the space
is not atomistic.
In this paper we are concerned with
human beings in a social network who are clearly atomistic. 
No matter how the social network changes, two individuals are still
separate individuals, provided they are still in the network.
Whether chemical elements or social individuals, we may have $\{x\}.f = \{y\}.f = \EMPTYSET$
if both $x$ and $y$ are removed from the system;  but they cannot be combined.

\begin{samepage}
\begin{proposition}\label{p.ATOM}
If $\CALS$ is atomistic, then for all monotone transformations, $f$,
\\
\hspace*{0.2in}
(a)
for all singleton sets, if $\{y\}.f \neq \EMPTYSET$,
$\{y\}.f.f^{-1} = \{y\}$;
\\
\hspace*{0.2in}
(b)
$(X\ \cap\ Z).f = X.f \ \cap\ Z.f$;
\\
\hspace*{0.2in}
(c)
$(X\ \cup\ Z).f = X.f \ \cup\ Z.f$.
\end{proposition}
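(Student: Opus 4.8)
The plan is to prove each identity by splitting it into two inclusions, observing that in every case one inclusion is handed to us by monotonicity while the reverse inclusion is exactly where the atomistic hypothesis is spent. I would dispose of (a) first, since it distills the one fact about atomistic spaces that the remaining parts reuse. Assuming $\{y\}.f \neq \EMPTYSET$, the preimage collection $\{y\}.f.f^{-1}$ plainly contains $\{y\}$, so it suffices to exclude any rival singleton: if $\{x\}.f = \{y\}.f$ then, because $\{y\}.f \neq \EMPTYSET$, the defining condition of an atomistic space forces $\{x\} = \{y\}$. Hence $\{y\}$ is the only singleton sent to $\{y\}.f$, which is the assertion of (a); in effect $f$ is injective on those singletons whose image is nonempty.

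For (c), the inclusion $X.f \cup Z.f \subseteq (X \cup Z).f$ is immediate from monotonicity via $X \subseteq X \cup Z$ and $Z \subseteq X \cup Z$. The reverse inclusion is the working half, and here I would use the fact that a transformation carries a set to the union of the images of its points, so that an arbitrary element of $(X \cup Z).f$ already occurs in $\{w\}.f$ for some $w \in X \cup Z$; such a $w$ lies in $X$ or in $Z$, placing the element in $X.f$ or in $Z.f$. Notably this half rests on union-preservation rather than on atomisticity, so (c) is really the ``free'' identity that any such transformation satisfies.

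Part (b) is where I expect the genuine obstacle, and where atomisticity is indispensable. The inclusion $(X \cap Z).f \subseteq X.f \cap Z.f$ is once more free from monotonicity, using $X \cap Z \subseteq X$ and $X \cap Z \subseteq Z$. The reverse inclusion $X.f \cap Z.f \subseteq (X \cap Z).f$ cannot follow from monotonicity alone: were two distinct singletons allowed to collapse to a common nonempty image, one could produce an element of $X.f \cap Z.f$ with $X \cap Z = \EMPTYSET$, so the identity would fail. My plan is to take a common image element $w'$, locate $x \in X$ and $z \in Z$ with $w' \in \{x\}.f$ and $w' \in \{z\}.f$ (again via the singleton decomposition), and then invoke (a): the shared nonempty value forces $\{x\}.f = \{z\}.f \neq \EMPTYSET$, whence atomisticity gives $x = z$. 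This common point lies in $X \cap Z$, so $w' \in \{x\}.f \subseteq (X \cap Z).f$ by monotonicity. Forcing this coincidence of preimages is the crux; the remainder is routine bookkeeping with monotonicity.
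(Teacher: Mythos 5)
Your parts (a) and (b) essentially track the paper's own argument: (a) is proved exactly as in the paper (a rival singleton with the same nonempty image would violate atomicity), and (b) uses (a) to force a single common preimage point lying in $X \cap Z$ and then applies monotonicity, which is precisely the paper's reasoning with $y = y'.f^{-1}$. The genuine problem is your treatment of (c), specifically the claim that the inclusion $(X \cup Z).f \subseteq X.f \cup Z.f$ is ``free'' because ``a transformation carries a set to the union of the images of its points.'' In this paper a transformation is just a function from $2^P$ to the power set of $P'$ (here monotone); it is \emph{not} assumed to be determined pointwise, and monotone set maps generically fail exactly this inclusion: a closure operator $\varphi$ is itself a monotone transformation, yet $(X \cup Z).\varphi$ typically strictly contains $X.\varphi \cup Z.\varphi$. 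The paper even stresses that one can have $\EMPTYSET.f = Y' \neq \EMPTYSET$, which is flatly incompatible with your decomposition principle (the union of images of the points of $\EMPTYSET$ is empty). So the ``union-preservation'' you invoke is not a property of transformations in this framework; it is precisely what atomicity is meant to secure (compare the paper's remark that on an atomistic domain $f$ is ``effectively the identity map'').

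Consequently your division of labor --- atomisticity indispensable for (b), (c) ``the free identity that any such transformation satisfies'' --- inverts the paper's proof, and the inverted version is unsound as stated: your argument for (c) never uses the atomistic hypothesis, and without it the inclusion $(X \cup Z).f \subseteq X.f \cup Z.f$ is simply false for monotone transformations. The paper instead proves this hard inclusion of (c) ``in the same manner as (b)'': a point $y' \in (X \cup Z).f$ has, by (a), a unique point preimage $y = y'.f^{-1} \in X \cup Z$, hence $y \in X$ or $y \in Z$, and monotonicity puts $y'$ into $X.f$ or $Z.f$. The repair is small --- rerun your own argument for (b) on (c), attributing the existence and uniqueness of point preimages to atomicity via (a) rather than to a background axiom about transformations --- but as written the justification of (c) rests on a principle the paper's framework explicitly contradicts. (A caveat: reading $y' \in Y.f$ as arising from some point of $Y$ is implicit in the paper's proof of (b) too; the difference is that the paper ties it to atomicity, whereas you promote it to a property of all transformations, which is where your version breaks.)
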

\end{samepage}
\noindent
\begin{proof}
(a)
Readily, $\{y\} \in \{y\}.f.f^{-1}$.
If $\exists \{x\} \neq \{y\} \in \{y\}.f.f^{-1}$ then
$\{x\}.f = \{y\}.f = y'$ violating atomicity.
\\
(b)
Monotonicity ensures that $(X \cap Z).f \subseteq X.f \cap Z.f$.
\\
Let $y' \in X.f \cap Z.f$.
By (a) $y' \in X.f$ implies $y = y'.f^{-1} \in X$.
Similarly, $y' \in Z.f$ implies $y = y'.f^{-1} \in X \cap Z$.
So $X.f \cap Z.f \subseteq (X \cap Z).f$.
\\
(c)
Again, monotonicity ensures $X.f \cup Z.f \subseteq (X \cup Z).f$
And atomicity ensures $(X \cup Z).f \subseteq X.f \cup Z.f$ in
the same manner as (b) above.
\qed
\end{proof}

Even though atomicity appears to be a natural, real world constraint;
its mathematical consequences are considerable.
Effectively, any transformation $f$ defined over an atomistic domain is
the identity map;
only the relationship structures between sets can be altered.
However, one can have $Y.f = \EMPTYSET'$ and $\EMPTYSET.f = Y'$.\footnote
	{
	Normally, we do not distinguish between $\EMPTYSET$ and
	$\EMPTYSET'$.
	The empty set is the empty set. 
	We do so here only for emphasis.
	}
And, monotonicity ensures that for all $X \subseteq Y$, $X.f = \EMPTYSET'$,
and for all $Z' \subseteq Y'$, $\EMPTYSET.f = Z'$, so $Z'.f^{-1} = \EMPTYSET$.

\subsection{Neighborhood Closure} \label{NC}

A closure operator that seems particularly appropriate in the social network context
is the ``neighborhood closure'' because ``neighborhoods'' can play a central role
in social behavior
\cite{HipFarBoe12}.
Let
$\CALS = (P, \CALA)$ be a set $P$ of points,
or elements, together with a symmetric adjacency 
relation $\CALA$.
By the {\bf neighborhood}, or neighbors, of a set $Y$ we mean the
set $Y.\NBHD = \{ z \not\in Y | \exists y \in Y, (y, z) \in \CALA \}$.
By the {\bf region dominated} by $Y$ we mean $Y.\REG = Y \cup Y.\NBHD$.\footnote
	{
	There is a large literature on dominating sets in undirected networks, $c.f.$
	\cite{HayHedSla98b,HayHedSla98}.
	}
Suppose $P$ is a set of individuals and the relation $\CALA$ denotes a 
relationship between them.
This relationship may be symmetric, such as ``mutual communication'', asymmetric, such as 
``hierarchical control'', or mixed such as ``friendship''.
The neighborhood $y.\NBHD$ about a person $y$ is the set of individuals with which $y$ 
directly relates.
The neighborhood, $Y.\NBHD$, of a set $Y$ of individuals is the set of individuals
not in $Y$ with whom at least one individual in $Y$ directly relates.
The region, $Y.\REG$ = $Y \cup Y.\NBHD$.
Members of $Y$ may, or may not, relate to each other.

We can visualize the
neighborhood structure of a discrete set of points, or individuals,  as
a graph such as Figure \ref{GRAPH}.\footnote
	{
	Whether $\CALA$ is regarded as reflexive, or not, is usually
	a matter of personal choice.
	The rows of a reflexive $\CALA$ capture the ``region'' concept;
	if irreflexive they represent the ``neighborhood'' concept.
	}
The neighbors of any point are those adjacent in the graph.
\begin{figure}[ht]
\centerline{\psfig{figure=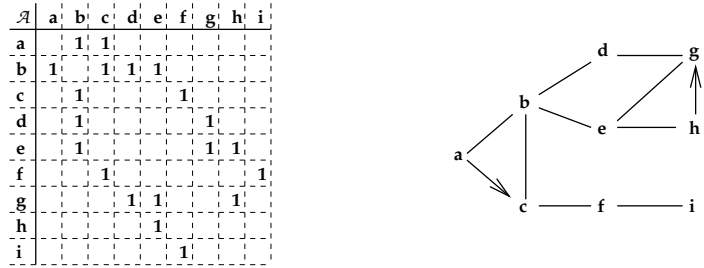,height=1.4in}}
\caption{A ``mixed'' adjacency matrix $\CALA$ and corresponding graph.
\label{GRAPH} }
\end{figure}
Thus, in the graph of Figure \ref{GRAPH} we have
$\{a\}.\NBHD = \{b, c\}$ or more simply
$a.\NBHD = bc$.
However, $a \not\in c.\NBHD = \{bf\}$.
Readily $g.\REG = deg$, and $h.\REG = egh$.
We may use the set delimiters, $\{ \ldots \}$, if we want to emphasize
its ``set nature''.

Given the neighborhood concepts $\NBHD$ and $\REG$, 
we define the {\bf neighborhood closure},
$\NCL$ to be
\begin{equation} \label{NCDEF}
Y.\NCL = \{ x | x.\REG \subseteq Y.\REG \}
\end{equation}
In a social system, the closure of a group $Y$ of individuals
are those additional individuals, $x$, all of whose connections
match those of the group $Y$.
A minimal set $X \subseteq Y$ of individuals for which 
$X.\NCL = Y.\NCL$ is sometimes called the nucleus, core, or 
generator of $Y.\NCL$.
Readily, for all $Y$,
\begin{equation} \label{NCCONT}
Y \subseteq Y.\NCL \subseteq Y.\REG
\end{equation}
\noindent
that is, $Y$ closure is always contained in the region dominated by $Y$.

Proofs of the following 3 propositions can be found in 
\cite{Pfa11}.
\begin{samepage}
\begin{proposition}\label{p.IDEMPOTENT}
$\NCL$ is a closure operator.
\end{proposition}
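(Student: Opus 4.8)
The plan is to verify the three closure axioms (C1)--(C3) in turn, isolating first one auxiliary fact on which everything else rests: the region operator $\REG$ is monotone, i.e. $Y \subseteq Z$ implies $Y.\REG \subseteq Z.\REG$. This is immediate from $Y.\REG = Y \cup Y.\NBHD$, since a point adjacent to some $y \in Y$ is adjacent to that same $y$ viewed as a member of $Z$, and so lands in $Z \cup Z.\NBHD = Z.\REG$ whether or not it happens to lie in $Z$ itself.

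Granting this, extensivity (C1) is essentially the left containment of (\ref{NCCONT}): for $y \in Y$ one has $y.\REG \subseteq Y.\REG$ directly, so $y \in Y.\NCL$ by the defining condition (\ref{NCDEF}). Monotonicity (C2) then falls out of the definition: if $Y \subseteq Z$, the monotonicity of $\REG$ gives $Y.\REG \subseteq Z.\REG$, hence any $x$ with $x.\REG \subseteq Y.\REG$ also satisfies $x.\REG \subseteq Z.\REG$, so $Y.\NCL \subseteq Z.\NCL$.

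The real work lies in idempotency (C3), and I expect it to be the main obstacle. Since extensivity applied to the set $Y.\NCL$ already yields $Y.\NCL \subseteq Y.\NCL.\NCL$, only the reverse inclusion remains. The key lemma I would isolate is that closure leaves the dominated region fixed: $Y.\NCL.\REG = Y.\REG$. One direction, $Y.\REG \subseteq Y.\NCL.\REG$, comes from $Y \subseteq Y.\NCL$ together with monotonicity of $\REG$. For the harder direction I would take $w \in Y.\NCL.\REG$ and argue by cases: if $w \in Y.\NCL$ then $w \in w.\REG \subseteq Y.\REG$; if instead $w \in Y.\NCL.\NBHD$, choose $x \in Y.\NCL$ adjacent to $w$, so that $w \in x.\REG \subseteq Y.\REG$. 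With this lemma in hand, idempotency is immediate: any $x \in Y.\NCL.\NCL$ satisfies $x.\REG \subseteq Y.\NCL.\REG = Y.\REG$, whence $x \in Y.\NCL$. The one delicate point is the neighborhood case, where I must confirm $w \neq x$ so that $w$ genuinely lies in $x.\REG$ --- a subtlety arising because $\NBHD$ excludes a set's own members --- which is settled by noting $x \in Y.\NCL$ while $w \notin Y.\NCL$.
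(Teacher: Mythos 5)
Your proof is correct. The paper itself does not spell out a proof of this proposition --- it defers to the reference [Pfa11] --- so there is no in-text argument to compare against, but your route is the natural one and is fully consistent with the paper's surrounding machinery: you first establish monotonicity of the region operator $\REG$ (correctly side-stepping the fact that $\NBHD$ alone is \emph{not} monotone because of its exclusion clause), derive (C1) and (C2) from it, and then reduce (C3) to the fixed-region lemma $Y.\NCL.\REG = Y.\REG$, which is precisely the kind of region-based characterization the paper itself leans on in Proposition \ref{p.CONTAIN} ($X.\NCL \subseteq Y.\NCL$ iff $X.\REG \subseteq Y.\REG$). You also correctly flagged and resolved the one genuine subtlety, namely checking $w \neq x$ in the case $w \in Y.\NCL.\NBHD$, so that $w$ really lies in $x.\REG$; and your argument nowhere assumes symmetry of $\CALA$, so it covers the mixed/asymmetric adjacencies the paper allows. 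No gaps.
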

\end{samepage}

\begin{samepage}
\begin{proposition}\label{p.CONTAIN}
$X.\NCL \subseteq Y.\NCL$ if and only if $X.\REG \subseteq Y.\REG$.
\end{proposition}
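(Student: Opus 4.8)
The plan is to handle the two implications separately, with the reverse direction resting on a single auxiliary identity. The forward direction is immediate: assuming $X.\REG \subseteq Y.\REG$, I take any $x \in X.\NCL$, so by the defining equation (\ref{NCDEF}) we have $x.\REG \subseteq X.\REG \subseteq Y.\REG$, whence $x \in Y.\NCL$ again by (\ref{NCDEF}). Thus $X.\NCL \subseteq Y.\NCL$ with no further machinery.

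For the reverse direction I would first record two elementary facts about $\REG$. First, $\REG$ is \emph{additive} over its argument, i.e. $S.\REG = \bigcup_{s \in S} \{s\}.\REG$; this follows from $Y.\REG = Y \cup Y.\NBHD$ with $Y.\NBHD = \{z \not\in Y \mid \exists y \in Y,\ (y,z) \in \CALA\}$, because the points of $Y$ adjacent to other points of $Y$ --- the only terms by which the pointwise union could differ from $Y.\REG$ --- already lie in $Y$. Additivity yields monotonicity of $\REG$ at once: $A \subseteq B$ implies $A.\REG \subseteq B.\REG$.

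The crux is then the identity $Y.\NCL.\REG = Y.\REG$. The inclusion $Y.\REG \subseteq Y.\NCL.\REG$ is just monotonicity applied to $Y \subseteq Y.\NCL$ from (\ref{NCCONT}). For the reverse inclusion, additivity gives $Y.\NCL.\REG = \bigcup_{x \in Y.\NCL} \{x\}.\REG$, and every $x \in Y.\NCL$ satisfies $x.\REG \subseteq Y.\REG$ by (\ref{NCDEF}), so the whole union sits inside $Y.\REG$. This identity is where the definition of neighbourhood closure does the real work, and I expect it to be the main obstacle --- not because it is deep, but because one must notice that $\REG$ collapses $\NCL$ back onto the original region rather than enlarging it, so that closure and region carry exactly the same $\REG$-information.

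With the identity in hand the reverse direction is mechanical: from $X.\NCL \subseteq Y.\NCL$, monotonicity of $\REG$ gives $X.\NCL.\REG \subseteq Y.\NCL.\REG$, and applying the identity to both sides collapses this to $X.\REG \subseteq Y.\REG$, completing the equivalence.
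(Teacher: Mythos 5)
Your proof is correct. Note that for this proposition the paper supplies no argument at all---it simply defers to an external reference (``Proofs of the following 3 propositions can be found in [Pfa11]'')---so there is no in-paper proof to compare against; your write-up is a self-contained replacement. The ``if'' direction is exactly the one-liner you give, reading off (\ref{NCDEF}). For the ``only if'' direction, your two auxiliary facts are both valid: additivity $S.\REG = \bigcup_{s \in S}\{s\}.\REG$ holds precisely for the reason you state (the points of $S$ adjacent to other points of $S$, which $S.\NBHD$ excludes but the pointwise neighborhoods include, are already absorbed into $S$), and the identity $Y.\NCL.\REG = Y.\REG$ follows from it together with (\ref{NCCONT}) and (\ref{NCDEF}). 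One remark: the identity is slightly more than you need. Since $X \subseteq X.\NCL \subseteq Y.\NCL$ by (\ref{NCCONT}) and the hypothesis, every $x \in X$ already satisfies $x.\REG \subseteq Y.\REG$ by (\ref{NCDEF}), so additivity gives $X.\REG = \bigcup_{x \in X}\{x\}.\REG \subseteq Y.\REG$ directly, bypassing $Y.\NCL.\REG = Y.\REG$ entirely. That said, your identity is worth recording in its own right---it says $Y$ and $Y.\NCL$ dominate exactly the same region, which is the structural reason the proposition is true---and both routes rest on the same additivity observation, which is the genuine content of the argument.
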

\end{samepage}

\noindent
Readily, $X.\NCL = Y.\NCL$ if and only if $X.\REG = Y.\REG$.

\begin{samepage}
\begin{proposition}\label{p.HALO3}
Let $\NCL$ be the closure operator.
If $y.\NBHD \neq \EMPTYSET$ then there exists $X \subseteq y.\NBHD$ 
such that $y \in X.\NCL$.
\end{proposition}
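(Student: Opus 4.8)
The plan is to exhibit an explicit witness $X$ and then verify the required membership directly from the definition of $\NCL$ in (\ref{NCDEF}). By that definition, $y \in X.\NCL$ holds precisely when $y.\REG \subseteq X.\REG$, so the entire task reduces to choosing a set $X \subseteq y.\NBHD$ whose dominated region contains the region dominated by $y$.

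The natural candidate is to take $X = y.\NBHD$, the whole neighborhood; this is a legitimate choice because $y.\NBHD \neq \EMPTYSET$ by hypothesis and $X \subseteq y.\NBHD$ trivially. With this choice, $y.\REG = \{y\} \cup y.\NBHD = \{y\} \cup X$, while $X.\REG = X \cup X.\NBHD$. The inclusion $X \subseteq X.\REG$ is immediate, so the neighborhood part of $y.\REG$ is already accounted for, and the only remaining point to locate inside $X.\REG$ is $y$ itself.

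First I would note that $y \not\in X$, since $y \not\in y.\NBHD$ by the definition of neighborhood, so it suffices to show $y \in X.\NBHD$. Here the key ingredient is the symmetry of the adjacency relation $\CALA$: choosing any $w \in y.\NBHD = X$ (possible since $y.\NBHD$ is nonempty) gives $(y, w) \in \CALA$, hence $(w, y) \in \CALA$, and because $y \not\in X$ this places $y \in X.\NBHD$. Combining the two parts yields $y.\REG = \{y\} \cup X \subseteq X.\NBHD \cup X = X.\REG$, which is exactly the statement $y \in X.\NCL$.

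I expect the proof to be short, with the only real subtlety being the choice of witness. A single neighbor will not suffice in general, since $y$ may have neighbors that are not adjacent to that one point, so $y.\NBHD$ could fail to be covered by $X.\REG$. Taking $X$ to be the full neighborhood sidesteps this, because then $y.\NBHD = X$ sits inside $X.\REG = X \cup X.\NBHD$ by the definition of the dominated region, and the symmetry of $\CALA$ does the remaining work of returning $y$ to $X.\NBHD$. Beyond recognizing this choice, I anticipate no genuine obstacle.
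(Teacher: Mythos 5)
Your proof is correct, but there is nothing internal to compare it against: the paper does not prove Proposition \ref{p.HALO3} at all, deferring it (along with Propositions \ref{p.IDEMPOTENT} and \ref{p.CONTAIN}) to \cite{Pfa11}. So your argument has to stand on its own, and it does. Taking the witness $X = y.\NBHD$, the verification $y.\REG \subseteq X.\REG$ demanded by (\ref{NCDEF}) splits exactly as you describe: $y.\NBHD = X \subseteq X.\REG$ is trivial, and $y \in X.\NBHD$ follows because $y \notin y.\NBHD$ and, by symmetry of $\CALA$, any one neighbor $w \in X$ gives $(w,y) \in \CALA$. Your side remark that a single neighbor would not suffice in general is also accurate. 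The one point worth stressing even more strongly than you do is that symmetry is essential, not a convenience: if $(y,z) \in \CALA$ but $(z,y) \notin \CALA$ and this is the only edge at $y$, then $y.\NBHD = \{z\} \neq \EMPTYSET$, yet $y \notin X.\NCL$ for every $X \subseteq \{z\}$, since $y \notin z.\REG$ and hence $y.\REG \not\subseteq X.\REG$; the proposition is simply false for asymmetric adjacency. The formal definition in Section \ref{NC} does posit a symmetric $\CALA$, but the paper's own running example (Figure \ref{GRAPH}) is ``mixed'' ($c \in a.\NBHD$ while $a \notin c.\NBHD$), so recording precisely where the symmetry hypothesis enters---as your proof does---is exactly the right thing to make explicit.
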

\end{samepage}

\noindent
So, unless $y$ is an isolated point, every point $y$ is in the 
closure of some subset of its neighborhood.

One might expect that every point in a discrete network must
be closed, $e.g.$ $\{x\}.\NCL = \{x\}$. 
But, this need not be true, as can be seen in Figure \ref{GRAPH}.
The region $b.\REG = \{abcde\}$ while
$a.\REG = \{abc\} \subseteq b.\REG$,
so $b.\NCL = \{ab\}$.
Similarly, $e.\NCL = \{eh\}$.

\subsection{Separation and Connectivity} \label{SC}

Two sets $X$ and $Z$ are said to be {\bf separated} if 
$X.\REG\ \cap\ Z.\REG = \EMPTYSET$.
Similarly, two sets $X$ and $Z$ are {\bf connected} if 
$X.\REG\ \cap\ Z.\REG \neq \EMPTYSET$.
And a set $Y$ is said to be {\bf connected} if there do not
exist separated sets $X, Z$ such that $Y = X\ \cup\ Z$.

Separation, and connectivity, are defined in terms of dominated regions.
Proposition \ref{p.NSEP} recasts this in terms of neighborhoods.

\begin{samepage}
\begin{proposition}\label{p.NSEP}
$X$ and $Z$ are separated if and only if 
$X.\NBHD\ \cap\ Z.\NBHD = X\ \cap\ Z.\NBHD = X.\NBHD\ \cap\ Z = X\ \cap\ Z = \EMPTYSET$.
\end{proposition}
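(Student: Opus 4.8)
The plan is to reduce the entire claim to a single distributive computation. By definition, $X$ and $Z$ are separated precisely when $X.\REG \cap Z.\REG = \EMPTYSET$, and by definition of the dominated region, $X.\REG = X \cup X.\NBHD$ and $Z.\REG = Z \cup Z.\NBHD$. So the proposition is really just a statement about when an intersection of two unions vanishes, and the two directions of the ``if and only if'' will both fall out of one equivalence rather than requiring separate arguments.

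First I would substitute the definitions and expand using distributivity of intersection over union:
\[
X.\REG \cap Z.\REG = (X \cup X.\NBHD) \cap (Z \cup Z.\NBHD).
\]
Distributing the right-hand side produces exactly the four cross-terms
\[
(X \cap Z)\ \cup\ (X \cap Z.\NBHD)\ \cup\ (X.\NBHD \cap Z)\ \cup\ (X.\NBHD \cap Z.\NBHD),
\]
which are precisely the four intersections named in the statement. I would then invoke the elementary fact that a finite union of sets is empty if and only if every one of the constituent sets is empty. Applying this to the displayed union gives immediately that $X.\REG \cap Z.\REG = \EMPTYSET$ holds if and only if all four cross-terms are empty, which is exactly the stated conjunction.

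I do not anticipate a genuine obstacle here. The only point requiring a moment's care is purely notational: the neighborhood $Y.\NBHD$ is defined to exclude $Y$ itself, but this plays no role in the computation, since we intersect the regions as plain sets and never rely on $X$ and $X.\NBHD$ being disjoint. The result is therefore a direct consequence of the distributivity of intersection over union, with the symmetric form of the conjunction mirroring the symmetry of the separation condition in $X$ and $Z$.
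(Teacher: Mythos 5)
Your proof is correct and follows essentially the same route as the paper: both expand $X.\REG \cap Z.\REG = (X \cup X.\NBHD) \cap (Z \cup Z.\NBHD)$ by distributivity into the four cross-terms and observe that the whole intersection vanishes exactly when every cross-term does. The only cosmetic difference is that the paper states this contrapositively (``not separated implies some cross-term is non-empty, converse similar'') while you package both directions into a single equivalence, which is if anything slightly cleaner.
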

\end{samepage}
\noindent
\begin{proof}
Suppose $X, Z$ are not separated, then
$X.\REG\ \cap\ Z.\REG = (X \cup X.\NBHD)\ \cap\ (Z \cup Z.\NBHD) \neq \EMPTYSET$.
Consequently, either $X \cap Z$ or
$X \cap Z.\NBHD$ or $X.\NBHD \cap Z$ or $X.\NBHD \cap Z.\NBHD \neq \EMPTYSET$.
The converse is similar.
\qed
\end{proof}

When $\CALA$ is symmetric, Figure \ref{2C} illustrates 
two examples of disjoint, but connected, sets $X$ and $Z$ suggested by the condition of
Proposition \ref{p.NSEP}.
In Figure \ref{2C}(a) both $X.\NBHD\ \cap\ Z$ and $X\ \cap\ Z.\NBHD$ are
non-empty, while in (b) $X.\NBHD\ \cap\ Z = X\ \cap\ Z.\NBHD = \EMPTYSET$,
however $X.\NBHD \ \cap\ Z.\NBHD \neq \EMPTYSET$.
So $X$ and $Z$ are not separated.

\begin{figure}[ht]
\centerline{\psfig{figure=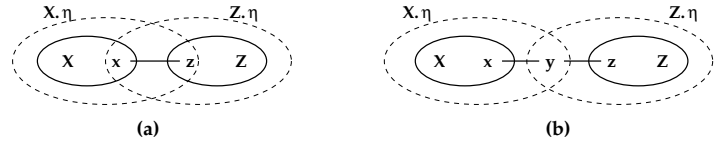,height=0.7in}}
\caption{Two examples of symmetric connectivity
\label{2C} }
\end{figure}

When $\CALA$ is not symmetric the possibilities of separation and connectivity
become more interesting.
In Figure \ref{3DC}(a), $X$ and $Z$ are not separated because $X.\NBHD\ \cap\ Z \neq \EMPTYSET$.
\begin{figure}[ht]
\centerline{\psfig{figure=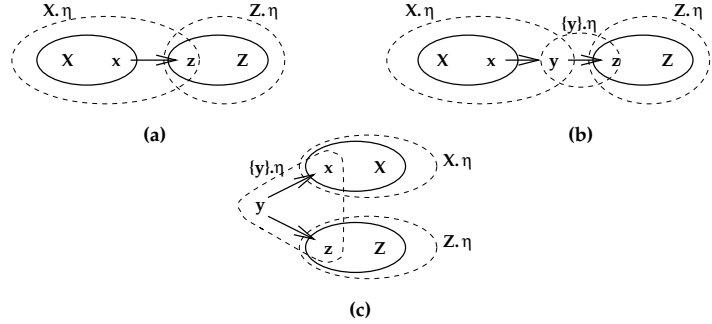,height=1.65in}}
\caption{Connectivity and separation when adjacency is asymmetric.
\label{3DC} }
\end{figure}
In Figure \ref{3DC}(b), $X$ and $Z$ are separated because $X.\NBHD \ \cap\ Z.\NBHD = \EMPTYSET$,
even though there is a path from $x$ to $z$ and $X.\NBHD \ \cap\ \{y\} \neq \EMPTYSET$ and
$\{y\}.\NBHD\ \cap\ Z \neq \EMPTYSET$.
Again in Figure \ref{3DC}(c), the sets $X$ and $Z$ are separated, although if 
the relationship $\CALA$ were symmetric they would not be.
In both (b) and (c) the set $X\ \cup\ \{y\}\ \cup\ Z$ is connected.

Readily, this definition of connectivity captures that of edge connectivity in graphs.

Monotone continuous transformations
do not preserve connectivity as do classical continuous functions.
In fact, we will show that in a {\it sufficiently large}
closure space, monotone, continuous transformations
preserve separation.
We must explain what we mean by {\it sufficiently large}.

Many idiosyncratic behaviors occur with small discrete examples.
For example, although we show in Proposition \ref{p.SEP} that continuous
transformations preserve separation, given the two small 
systems $\CALS$ and $\CALS'$ of Figure \ref{CX1}, this is manifestly
not true.
\begin{figure}[ht]
\centerline{\psfig{figure=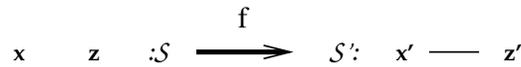,height=0.3in}}
\caption{$f$ is monotone, continuous, but does not preserve separation.
\label{CX1} }
\end{figure}
The only non-empty closed set of $\CALS'$ is $\{x'z'\}$, so of necessity
for any $Y$, $Y.\NCL.f \subseteq Y.f.\NCL'$.
There is insufficient ``structure'' around $x$, and $x'$, to be able to 
make general statements about the behavior of $f$.
We counter this by requiring in some propositions that the sets involved
be {\bf sufficiently large}.
This hugely simplifies the propositional statement, however in the proof,
we always specify just what we mean in this instance by ``sufficiently
large''.

\begin{samepage}
\begin{proposition}\label{p.SEP}
Let $\CALS$ be an atomistic space and let $f$ be monotone and continuous.
If $X$ and $Z$ are separated, then $X.f$ and $Z.f$ are separated,
provided $X$ and $Z$ are sufficiently large.
\end{proposition}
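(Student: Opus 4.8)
The plan is to verify target separation through the neighborhood reformulation of Proposition \ref{p.NSEP}: it suffices to check, for the images in $\CALS'$, that $X.f \cap Z.f = \EMPTYSET$, that $X.f.\NBHD' \cap Z.f = X.f \cap Z.f.\NBHD' = \EMPTYSET$, and that $X.f.\NBHD' \cap Z.f.\NBHD' = \EMPTYSET$. The two conditions that mention no target neighborhood are immediate from atomisticity. By Proposition \ref{p.ATOM}(b), $X.f \cap Z.f = (X \cap Z).f$; since $X$ and $Z$ are separated we have $X \cap Z = \EMPTYSET$, and (using the standing convention $\EMPTYSET.f = \EMPTYSET$, which is in fact forced here, since monotonicity would otherwise place $\EMPTYSET.f$ inside both $X.f$ and $Z.f$) this gives $X.f \cap Z.f = \EMPTYSET$.

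The remaining, genuinely hard, conditions assert that no point of $\CALS'$ is simultaneously a neighbor of $X.f$ and a neighbor of (or member of) $Z.f$. I would argue by contradiction: a bridging point $w'$ would lie in $X.f.\REG' \cap Z.f.\REG'$. The difficulty is that separation is phrased with regions, whereas the only tool that transports structure across $f$ is continuity, which controls closures, $Y.\NCL.f \subseteq Y.f.\NCL'$. So the crux is to convert \emph{``$w'$ is a neighbor of $X.f$''} into \emph{``$w' \in X.f.\NCL'$''}, after which continuity can be exploited. The containment (\ref{NCCONT}), $X.f \subseteq X.f.\NCL' \subseteq X.f.\REG'$, exhibits the gap to be closed as exactly those neighbors of $X.f$ that fail to enter its closure. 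This is where \textbf{sufficiently large} enters: I would fix its meaning to be precisely that $X$ and $Z$ are big enough that $X.\REG = X.\NCL$ and $Z.\REG = Z.\NCL$, so that in the source region-separation is already closure-separation, and large enough that any bridging neighbor survives $f$ with a nonempty preimage.

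With that in hand the pullback proceeds as follows. The witness $w'$ is adjacent in $\CALA'$ to some $x' \in X.f$ and some $z' \in Z.f$; by Proposition \ref{p.ATOM}(a) these carry unique preimages $x \in X$ and $z \in Z$. Since $x \in X$ trivially has $x.\REG \subseteq X.\REG$, continuity forces the target region of $x'$ into $X.f.\REG'$, and likewise for $z'$; combined with the largeness identities $X.\REG = X.\NCL$, $Z.\REG = Z.\NCL$, the target adjacencies $x' \sim w' \sim z'$ then localize $w'$ inside $X.f.\NCL' \cap Z.f.\NCL'$, where Propositions \ref{p.IC1} and \ref{prop.IC4} let me identify these closures with images of the source closures and read back a common point of $X.\REG$ and $Z.\REG$ — contradicting separation in $\CALS$.

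The main obstacle — and the reason the hypothesis is indispensable — is the one-directional nature of continuity: it pushes closures from $\CALS$ to $\CALS'$ but supplies no converse, and surjectivity is not assumed, so a target point witnessing a failure of separation may have no preimage and no a priori tie to the source (this is exactly what the small system of Figure \ref{CX1} exploits). The entire weight of the argument is to show that when $X$ and $Z$ are large enough no such rogue witness can exist. Pinning down the precise largeness threshold that simultaneously guarantees $X.\REG = X.\NCL$ (so that region-separation becomes closure-separation) and the survival and pullback of every bridging neighbor is the delicate step, and it is there that I expect the real work of the proof to lie.
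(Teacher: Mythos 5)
Your opening moves match the paper's: the reduction via Proposition \ref{p.NSEP}, atomisticity (Proposition \ref{p.ATOM}) for the conditions not involving target neighborhoods, and you have correctly diagnosed the central difficulty (continuity only pushes closures forward, and a bridging witness in $\CALS'$ may have no preimage). But your treatment of the hard cases has a genuine gap, in two places. First, the step ``the target adjacencies $x' \sim w' \sim z'$ localize $w'$ inside $X.f.\NCL' \cap Z.f.\NCL'$'' is a non sequitur: from $x \in X \subseteq X.\NCL$ and continuity you do get $x' \in X.f.\NCL'$, hence $x'.\REG' \subseteq X.f.\REG'$ and so $w' \in X.f.\REG'$; but membership in the target \emph{region} is not membership in the target \emph{closure}. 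To put $w'$ in $X.f.\NCL'$ you would need $w'.\REG' \subseteq X.f.\REG'$, and nothing supplies this --- your largeness identity $X.\REG = X.\NCL$ lives in the source and does not transfer across $f$, precisely because continuity is one-directional, as you yourself observe. Second, the ``read back'' step fails: Proposition \ref{p.IC1} requires the image set to be closed (not established), Proposition \ref{prop.IC4} only pushes closure equalities forward, and extracting a common source point from a target intersection needs exactly the preimage existence you do not have. Folding ``every bridging neighbor survives $f$ with a nonempty preimage'' into the definition of \textbf{sufficiently large} is not legitimate: that is a property of $f$ and of arbitrary points of $P'$, not of the size of $X$ and $Z$, so no enlargement of $X$ and $Z$ can secure it; and the other half, $X.\REG = X.\NCL$, is generally unattainable for a separated pair (in a long path, any proper initial segment $X$ has a neighbor whose region escapes $X.\REG$), which would leave the proposition nearly vacuous.

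The idea you are missing is the paper's use of Proposition \ref{p.HALO3}, which makes the whole argument local to a point that provably \emph{does} have a preimage. If, say, $z' \in X.f.\NBHD' \cap Z.f$, pick $x' \in X.f$ with $z' \in x'.\NBHD'$ and pull back only $x'$ to $x \in X$ (atomisticity); here ``sufficiently large'' means merely that $\{x\}.\NBHD \neq \EMPTYSET$, so Proposition \ref{p.HALO3} yields $W \subseteq x.\NBHD$ with $x \in W.\NCL$. Continuity is then applied to this small set $W$ --- not to $X$, and not to the witness: $x' \in W.\NCL.f \subseteq W.f.\NCL'$ would force $x'.\REG' \subseteq W.f.\REG'$, yet the alien neighbor $z' \in x'.\NBHD'$ does not lie in $W.f.\REG'$ (the paper asserts $z' \notin W.f.\NBHD'$, and $z' \in W.f$ would pull back, by unique preimages, to a point of $W \subseteq X.\REG$ lying in $Z$, contradicting separation). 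This contradicts continuity. Crucially, the witness $z'$ --- or $y'$ in the case $X.f.\NBHD' \cap Z.f.\NBHD' \neq \EMPTYSET$ --- never needs a preimage; it only needs to sit in $x'.\NBHD'$. That inversion (pull back the anchored point and a small neighborhood set around it, rather than the witness) is what your proposal lacks, and without it the argument does not go through.
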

\end{samepage}
\noindent
\begin{proof}
Assume that $X.f$ and $Z.f$ are not separated, so
$X.f.\REG'\ \cap\ Z.f.\REG' \neq \EMPTYSET$.
By Prop. \ref{p.NSEP}, either $X.f \cap Z.f$ or $X.f.\NBHD' \cap Z.f$ or
$X.f \cap Z.f.\NBHD'$ or $X.f.\NBHD' \cap Z.f.\NBHD'$ is non-empty.
\\
By Prop. \ref{p.ATOM}, $X.f \cap Z.f \neq \EMPTYSET$ implies
$X \cap Z \neq \EMPTYSET$, contradicting the separation of $X$ and $Z$.
\\
Suppose $X.f.\NBHD' \cap Z.f \neq \EMPTYSET$,
implying $\exists z' \in Z.f$ such that $z' \in X.f.\NBHD'$
and $\exists x' \in X.f, z' \in x'.\NBHD'$.
Consider $x \in X, x = x'.f^{-1}$.
We may assume that $\{x\}.\NBHD \neq \EMPTYSET$ (since $X$ is sufficiently
large) and that by Prop. \ref{p.HALO3}, $x \in W.\NCL, W \subseteq \{x\}.\NBHD$.
Now, $x' \in W.\NCL.f$, but $x' \not\in W.f.\NCL'$ because $z' \not\in W.f.\NBHD'$
contradicting continuity.
\\
A similar argument holds if $X.f \cap Z.f.\NBHD' \neq \EMPTYSET$.
\\
Finally, we must consider the case where
$\exists y \in X.f.\NBHD' \cap Z.f.\NBHD' \neq \EMPTYSET$.
Possibly, $\{y'\}.f^{-1} = \EMPTYSET$.
However, we obtain the preceding contradiction of continuity by simply 
letting $y'$ take the role of $z'$.
\qed
\end{proof}

Proposition \ref{p.SEP} is a more general restatement of Proposition 15, found in
\cite{Pfa11}.
A weaker version can be demonstrated over non-atomistic ground sets if surjectivity
is assumed.

Entropy, in the sense that complex systems tend to break down into simpler
systems, seems to be reflected in Proposition \ref{p.SEP}.
Separation is preserved under ``smooth'', continuous change.
Creating connections (edges) is almost always a discontinuous process requiring
effort.
Breaking connections, however, is nearly always a continuous process, as shown
by the following.

\begin{samepage}
\begin{proposition}\label{p.DEL}
Let $\CALS$ be atomistic.
A monotone transformation $f$, which deletes a symmetric edge $(x, z)$
from $\CALA$ will be discontinuous if and only if either
\\
\hspace*{0.2in}
(a)
$z \in x.\NCL$ (or $x \in z.\NCL$), and $x.\NCL \neq z.\NCL$
\\
or
\\
\hspace*{0.2in}
(b)
$(x,z)$ is an edge in a chordless cycle $< v, \ldots, w, x, z, \ldots, v >$
\\
\hspace*{0.4in}
where either $|x.\NBHD| = 2$ or $|z.\NBHD| = 2$.
\end{proposition}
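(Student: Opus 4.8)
The plan is to exploit the fact, noted just before the proposition, that over an atomistic space $f$ acts as the identity on points, so deleting the symmetric edge $(x,z)$ changes \emph{only} the two regions $x.\REG$ and $z.\REG$: each loses exactly one element ($x.\REG$ loses $z$, $z.\REG$ loses $x$), while every other region is untouched. Consequently continuity, $Y.\NCL.f \subseteq Y.f.\NCL'$, can only fail through a closure relation that is mediated by one of these two regions, and by Prop.~\ref{p.CONTAIN} ($X.\NCL \subseteq Y.\NCL$ iff $X.\REG \subseteq Y.\REG$) every such relation is governed by an inclusion of the form $x.\REG \subseteq W.\REG$ (or the symmetric statements with $z$). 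So the first step is to reduce the whole question to tracking how the four inclusions ``$x.\REG \subseteq \cdot$'', ``$\cdot \subseteq x.\REG$'', and their $z$-analogues, survive the simultaneous, symmetric shrinking of $x.\REG$ and $z.\REG$. A discontinuity is then a \emph{witness} pair $Y,u$ for which the relevant inclusion holds before deletion but is broken after.

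For the forward (``if'') direction in case~(a), I would assume $z \in x.\NCL$ with $x.\NCL \neq z.\NCL$; by Prop.~\ref{p.CONTAIN} this says $z.\REG \subseteq x.\REG$ but $z.\REG \neq x.\REG$, hence $z.\REG \subsetneq x.\REG$, while $(x,z)\in\CALA$ forces $z \in x.\REG$ and $x \in z.\REG$. The idea is that the deletion removes these two witnesses \emph{asymmetrically} relative to the strict containment: the domination that placed $z$ in $x.\NCL$ is destroyed, and strictness prevents the reverse inclusion from repairing it. Taking the generator of $x.\NCL$ (available through Prop.~\ref{p.HALO3}) as $Y$ and $z$ (or a point it dominates) as $u$, I would check directly that the inclusion $z.\REG' \subseteq x.\REG'$ fails, exhibiting the witness and hence discontinuity.

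For case~(b) I would assume, without loss of generality, $|x.\NBHD| = 2$, say $x.\NBHD = \{z,w\}$, with $(x,z)$ on a chordless cycle $\langle v,\ldots,w,x,z,\ldots,v\rangle$. After deletion $x.\REG' = \{x,w\}$, so $x$ becomes pendant on $w$. Here the chordless hypothesis does the real work: since the cycle has no chords, $w$ is not adjacent to $z$ and $z$'s other cycle-neighbour is not adjacent to $x$, so no alternate adjacency exists to reconstruct, after deletion, the region-inclusion that held ``around the cycle'' beforehand. I would turn this missing alternate route into a witness $u \in Y.\NCL \setminus Y.f.\NCL'$, again via Prop.~\ref{p.CONTAIN}. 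A chord, or an endpoint of degree $\geq 3$, would supply exactly such an alternate neighbour and so restore continuity, which is what makes the two numerical side-conditions necessary.

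The converse (``only if'') I would prove in contrapositive form: assuming neither (a) nor (b), show the deletion is continuous by ruling out every candidate witness. Negating (a) leaves $x.\REG$ and $z.\REG$ either incomparable or equal, so the symmetric deletion perturbs them ``in step'' rather than breaking a strict domination; negating (b) guarantees that each of $x,z$ either lies on no chordless cycle through $(x,z)$ or retains an alternate neighbour of degree $\geq 3$, which by Prop.~\ref{p.CONTAIN} reconstructs the needed dominating region after the edge is gone. I would then verify, case by case over the finitely many inclusions that $x$ or $z$ can touch, that each is preserved. I expect \emph{this converse to be the main obstacle}: one must argue exhaustively that, outside (a) and (b), a chord or alternate neighbour always restores the dominating region, while simultaneously controlling the interaction between the two regions $x.\REG$ and $z.\REG$ that shrink at once, and disposing of the degenerate ``not sufficiently large'' configurations (isolated or near-isolated endpoints) in the spirit of the caveat preceding Prop.~\ref{p.SEP}.
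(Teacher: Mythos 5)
Your setup and your case (a) are correct, and case (a) coincides with the paper's own argument: the witness is simply $Y=\{x\}$, since $z \in x.\NCL$ means $z.\REG \subseteq x.\REG$, while after the deletion $z \in z.\REG'$ but $z \notin x.\REG'$, so $z \in \{x\}.\NCL.f$ yet $z \notin \{x\}.f.\NCL'$. (Notice this never uses the clause $x.\NCL \neq z.\NCL$.) In case (b), however, you never name the witness set, and the mechanism you lean on is not the one that works. Chordlessness does not block ``reconstruction'': after deleting $(x,z)$ the edge $(w,x)$ survives, so $x.\REG' = \{w,x\} \subseteq \{w,z\}.\REG'$ and hence $x \in \{w,z\}.\NCL'$; the natural witness $\{w,z\}$ proves nothing. (This is in fact the witness the paper itself uses, justified by the false claim that $x \not\in \{w'z'\}.\NBHD'$.) A working witness must reach one step further around the cycle: let $u$ be the cycle-neighbour of $w$ other than $x$ and take $Y = \{z,u\}$. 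Before deletion $x.\REG = \{w,x,z\} \subseteq Y.\REG$ (because $x \in z.\NBHD$ and $w \in u.\NBHD$), so $x \in Y.\NCL$; after deletion the only neighbour of $x$ is $w \notin Y$, and $x \notin Y$, so $x \notin Y.\REG' \supseteq Y.\NCL'$. Chordlessness enters only to guarantee $u \neq z$; that is all the ``real work'' it does in this direction.

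The converse is where your proposal breaks down, and it cannot be fixed by more careful case analysis: the implication ``discontinuous $\Rightarrow$ (a) or (b)'' is false, so the contrapositive you plan to verify witness-by-witness is not provable. Consider the path on five vertices with edges $(w,x)$, $(x,z)$, $(z,n)$, $(n,m)$, and delete $(x,z)$. Then $z \notin x.\NCL$ (since $n \in z.\REG \setminus x.\REG$) and $x \notin z.\NCL$ (since $w \in x.\REG \setminus z.\REG$), so (a) fails; a tree has no cycles, so (b) fails. Yet for $Y = \{x,m\}$ we have $z.\REG = \{x,z,n\} \subseteq \{w,x,z,n,m\} = Y.\REG$, so $z \in Y.\NCL$, while $Y.\REG' = \{w,x\} \cup \{n,m\}$ omits $z$, so $z \notin Y.\NCL'$: the deletion is discontinuous. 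The mechanism your exhaustive analysis would have to overlook is exactly this one: $z$ can lie in $Y.\NCL$ with $x$ as its sole attachment to $Y$, while $z$'s remaining neighbours are dominated by $Y$ through vertices remote from both $x$ and $z$; no tight binding and no cycle is involved. (The clause $x.\NCL \neq z.\NCL$ generates a second family of counterexamples: an adjacent pair with $x.\REG = z.\REG$, as in Figure \ref{EXAMPLE}, is discontinuous by the case-(a) witness above but satisfies neither (a) nor (b).) For calibration, the paper's own converse does not close this gap either: it argues directly from a minimal witness $Y$ rather than by contraposition, but its last step declares $< v, w, x, z, v >$ a chordless cycle while silently assuming the extracted $w \in Y$ is adjacent to $x$ (in the path example that vertex is $m$, which is not adjacent to $x$), and it never establishes the degree-2 clause of (b) at all. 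So the difficulty you flagged as ``the main obstacle'' is not labour but falsity; any completion must first repair the statement itself.
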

\end{samepage}
\begin{proof}
Suppose (a) holds and $z \in x.\NCL$.
Then $z' = z.f \in \{x\}.\NCL.f$, but $z' \not\in {x}.f.\NCL'$, so
$f$ is discontinuous.
\\
Suppose (b) holds and $|x.\NBHD| = 2$ , so $x.\NBHD = \{w, z\}$.
Now $x \in \{w, z\}.\NCL$ so $x' = \{x\}.f \in \{wz\}.\NCL.f$,
but $x' = x.f \not\in \{wz\}.f.\NCL'$ because $x \not\in \{w'z'\}.\NBHD'$
Again $f$ is discontinuous.
The reasoning is similar when $x.\NBHD = \{wz\}$.
\\
\\
Conversely, suppose $f$ is discontinuous.
Let $Y$ be a minimal set such that $Y.\NCL.f \not\subseteq Y.f.\NCL'$.
Readily, either $z \in Y.\NCL$ but $z' = z.f \not\in Y.f.\NCL'$ ,
(or $x \in Y.\NCL, x' = x.f \not\in Y.f.\NCL'$).
We may assume the former.
If $z \in Y$ then $z' \in Y.f.\NCL'$ trivially, so $z \in Y.\NBHD$.
Moreover, $z \in Y.\NCL$ implies $z.\NBHD \subseteq Y.\REG$.
Since $(x, z) \in \CALA$, $z \in x.\NBHD$, thus $x \in Y$.
If $Y = \{x\}$, then (a) holds and we are done.
\\
Assuming $z \not\in x.\NCL$ there must exist
$v \in z.\NBHD, v \not\in x.\NBHD$.
Since $z \in Y.\NCL$, $\exists w \in Y, v \in w.\NBHD$.
We claim this cycle $< v, w, x, z, v >$ is chordless.
$v \not\in x.\NBHD$ because $z \not\in x.\NCL$.
$z \not\in w.\NBHD$ because $Y$ is minimal.
\qed
\end{proof}

\noindent
Comments: Since to be a cycle, $z \in x.\NBHD$ and $x \in z.\NBHD$,
the condition of (b) above restricts either $x$ or $z$ go be of degree 2.
The second half of condition (a), $x.\NCL \neq z.\NCL$, is needed only for situations such
as that of Figure \ref{EXAMPLE}
\begin{figure}[ht]
\centerline{\psfig{figure=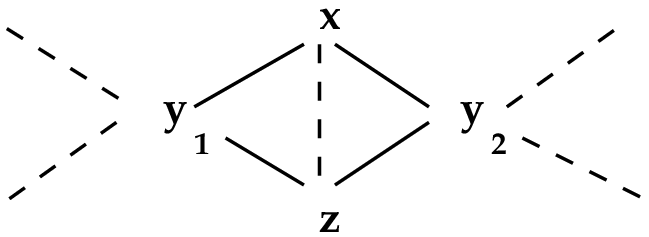,height=0.5in}}
\caption{Two points with $x.\NCL = z.\NCL$.
\label{EXAMPLE} }
\end{figure}
in which $x.\NCL = z.\NCL$ regardless of what
other nodes are connected to $y_1$ and $y_2$.
Addition, or deletion, of the dashed edge $(x, z)$ makes no change
in the closed set structure whatever.

In social terms, Proposition \ref{p.DEL} would assert that breaking a connection
between $x$ and $z$ represents a discontinuity if $z$ is very tightly bound to
$x$, that is has the same shared connections to others nearby.
This certainly seems consistent with the real world.
That breaking a chordless 4-cycle can be discontinuous is more surprising.

\subsection {Triadic Closure}

Creating a relationship, or edge $(x, z)$, will be continuous if $x$ and $z$ 
are already connected, that is, there exist $y \in x.\NBHD$ and $y \in z.\NBHD$.
The creation of $(x, z) \in \CALA$ is commonly known as
{\bf triadic closure}
The study of triads was initiated by Granovetter in
\cite{Gra73},
although he did not use the term ``closure''.
It is not truly a closure operator (it is not idempotent);
however, it
appears to be a frequently occurring process in dynamic social systems
\cite{HanPetDodWat07,MolVolFla12,Ops11}.
Kossinets and Watts 
\cite{KosWat06}
observe that ``For some specified value of $d_{ij}$, cyclic closure bias
is defined as the empirical probability that two previously unconnected 
individuals who are distance $d_{ij}$ apart in the network will initiate
a new tie.  
Thus cyclic closure naturally generalizes the notion of triadic closure''
(p. 88).

\subsection{Chordless $k$-Cycles} \label{CkC}

A closure system $\CALS$ is said to be {\bf irreducible} if every 
singleton set is closed.
The system of Figure \ref{GRAPH} is not irreducible because
$\{b\}.\NCL = \{ab\}$ and $\{f\}.\NCL = \{fi\}$.
We say the elements $a$ and $i$ are subsumed by $b$ and $f$
respectively because any closed set containing $b$, or $f$, much also
contain $a$, or $i$.

An iterative process which reduces any graph by successively
deleting these subsumed points (they contribute little to our 
understanding of the closed set structure) and their relationships
is described in 
\cite{Pfa11,Pfa12}.
There it is shown that a point $y$ will be in an irreducible 
subgraph if and only if $y$ is part of a chordless cycle of 
length $\geq$ 4, or on a path between two such chordless cycles.
These chordless cycles of length 4, or greater, we call
{\bf chordless $k$-cycles}, or just $k$-cycles.\footnote
	{
	A graph with no chordless cycles of length $\geq$ 4 is
	called a ``chordal graph''.
	Chordal graphs have an extensive literature, $c.f.$
	\cite{JacPet90,McK93}.
	}
The system of Figure \ref{GRAPH} contains just one chordless
$k$-cycle, $< b, d, g, e, b >$.
It is surmised that knowing the $k$-cycles of a system is one 
key to understanding its global structure.

The sets $X = \{egh\}$ and $Z = \{fi\}$ are separated in Figure \ref{GRAPH}.
Suppose a transformation $f$ connects them, either by creating a 
simple relationship/edge $(h, i)$ as shown in Figure \ref{GRAPH2},
or by adding a new point $y$ with $\{y\}.\NBHD = \{hi\}$.
\begin{figure}[ht]
\centerline{\psfig{figure=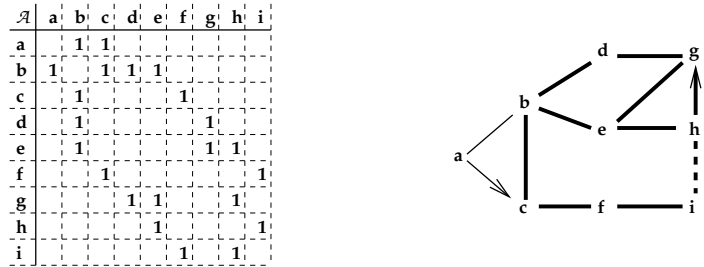,height=1.4in}}
\caption{The chordless $k$-cycles of Figure \ref{GRAPH} after adding the link $(h, i )$.
\label{GRAPH2} }
\end{figure}
Readily, the global structure, which now has two chordless $k$-cycles
is quite different.
They are $< b, d, g, e, b >$ and $< b, c, f, i, h, e, b >$.
Both have been emboldened in Figure \ref{GRAPH2}.\footnote
	{
	Each element of the triangle $< e h g e >$ is an element of one of the
	two $k$-cycles.
	}
In a sense, the change is ``high energy''.
The change in the matrix $\CALA$ is barely noticeable.
By Proposition \ref{p.SEP}, such a transformation must be
discontinuous.

``Entropy'', in the sense that complex, highly organized systems tend
to break down into simpler, more random systems seems to be
reflected in Proposition \ref{p.SEP}.
Separation is preserved under ``smooth'', continuous change.
Except for triadic closure, creating more connections (edges) is a 
discontinuous process requiring effort.
By Proposition \ref{p.DEL}, breaking connections, however, is most often a continuous
process.

Granovetter, in \cite{Gra73}, arrives at a somewhat analogous
conclusion.
He observes that
``the configuration of three strong ties became increasingly
frequent as people know one another longer and better''
[p. 1364], or equivalently, the ongoing social processes tend to
create triadic closure.
He also studied the configurations where two successive links
were not triadically closed.
He called these ``bridges'' and illustrated them in the following
Figure \ref{GRAPH3}, which I have re-drawn from his original.
\begin{figure}[ht]
\centerline{\psfig{figure=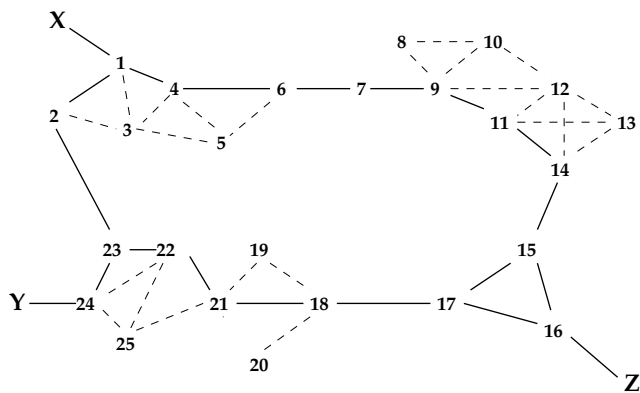,height=2.0in}}
\caption{A re-drawn version of Granovetter's Figure 2 (p.1365).
\label{GRAPH3} }
\end{figure}
$X$, $Y$ and $Z$ denote unrepresented, but connected subgraphs.
Granovetter was concerned with strong and weak ties between 
individuals and used his figure to illustrate his contention that
``no strong tie is a bridge''.
Our version is drawn to emphasize the large chordless $k$-cycle
(solid lines) and clusters of subsumed nodes (dashed lines).
No node in the triangle $\{ 15, 16, 17 \}$ is subsumed because
15 and 17 lie on the large $k$-cycle, and 16 presumably
lies on some path to a chordless $k$-cycle in the substructure labeled $Z$.
 
\section {Summary} \label{S}

Entropy in networks is real.
Systems do break down.
In this paper we have proposed a non-statistical model to describe
this process.
Smooth, continuous processes can remove relationships/edges throughout
the system, except those tightly bound in a closed cluster or chordless
4-cycle.
But, continuous processes cannot create any relationship, or link,
between separated subsets.
Proposition \ref{p.SEP} was a complete surprise.
We had predicted just the opposite;
that they would preserve connectivity as do graph homomorphisms
which are continuous.
Proposition \ref{p.SEP} and, to a lesser degree, Proposition
\ref{p.DEL} appear to have significant relevance to the behavior
of dynamic social networks.
They are the major contribution of this paper.

If these observations represent reality, the result of continuous 
change, or evolution, on a network
should be a collection of triadic clusters
loosely connected by bridges.
Since it is generated by ``entropy'', this kind of network might
be considered to be the epitome of a ``random'' network.

The author is not sufficiently well trained as a sociologist to 
assess the relevance of the mathematical approach developed in this
paper to the work of Granovetter and others.
But, it appears that we are actually describing the kinds
of networks that appear in sociology, and that concepts of continuity
and discontinuity based on closed sets are relevant.
What we need is to test these results against a number of large,
dynamic social networks.

{\small
\bibliography{/home/jlp/MATH.d/BIB.d/mathDB,/home/jlp/MATH.d/BIB.d/pfaltzDB,/home/jlp/MATH.d/BIB.d/otherDB,/home/jlp/MATH.d/BIB.d/orlandicDB,/home/jlp/MATH.d/BIB.d/haddletonDB,/home/jlp/MATH.d/BIB.d/databaseDB,/home/jlp/MATH.d/BIB.d/evsciDB,/home/jlp/MATH.d/BIB.d/aiDB,/home/jlp/MATH.d/BIB.d/closureDB,/home/jlp/MATH.d/BIB.d/dataminingDB,/home/jlp/MATH.d/BIB.d/conceptDB,/home/jlp/MATH.d/BIB.d/accessDB,/home/jlp/MATH.d/BIB.d/logicDB,/home/jlp/MATH.d/BIB.d/closureappDB,/home/jlp/MATH.d/BIB.d/gtransDB,/home/jlp/MATH.d/BIB.d/topologyDB,/home/jlp/MATH.d/BIB.d/socialDB}
}

\end{document}